\def\beqnn{\begin{eqnarray*}}\def\eeqnn{\end{eqnarray*}}
\newtheorem{theorem}{Theorem}[section]
\newtheorem{proposition}[theorem]{Proposition}
\newtheorem{corollary}[theorem]{Corollary}
\theoremstyle{definition}
\newtheorem{remark}[theorem]{Remark}
\theoremstyle{question}
\numberwithin{equation}{section}
\begin{document}

\title{Generalized Hilbert series operators}


\author{Jianjun Jin}
\address{School of Mathematics Sciences, Hefei University of Technology, Xuancheng Campus, Xuancheng 242000, P.R.China}
\email{jinjjhb@163.com, jin@hfut.edu.cn}

\author{Shuan Tang}
\address{School of Mathematics Sciences, Guizhou Normal University, Guiyang 550001,
P.R.China} \email{tsa@gznu.edu.cn}


\thanks{The authors were supported by National Natural Science
Foundation of China(Grant Nos. 11501157, 12061022).}

\subjclass[2010]{26D15; 47A30}



\keywords{Generalized Hilbert series operator; Carleson measure; boundedness of operator; norm of operator.}

\begin{abstract}
In this note we study the generalized Hilbert series operator $H_{\mu}$, induced by a positive Bore measure $\mu$ on $[0, 1)$, between weighted sequence spaces. We characterize the measures $\mu$ for which  $H_{\mu}$ is bounded between different sequence spaces. Finally, for certain special measures, we obtain the sharp norm estimates of the operators and establish some new generalized Hilbert series inequalities with the best constant factors.
\end{abstract}

\maketitle
\section{Introduction}

Let $p>1$ and let $\alpha$ be a real number.  We define the weighted sequence space $l_{\alpha}^{p}$ as
\begin{equation*}l_{\alpha}^{p}:=\left\{a=\{a_n\}_{n=1}^{\infty}: \|a\|_{p, \alpha}=(\sum_{n=1}^{\infty} n^{\alpha} |a_{n}|^p)^{\frac{1}{p}}<\infty\right\}.\end{equation*}
If $\alpha=0$, we will write $l_p$ and $\|a\|_{p}$ instead of $l_{\alpha}^p$ and $\|a\|_{p, \alpha}$, respectively.

The Hilbert series operator, induced by the Hilbert kernel $\frac{1}{m+n}$, is defined as
\begin{equation*} H(a)(m)=\sum_{n=1}^\infty \frac{a_n}{m+n}, \: a=\{a_n\}_{n=1}^{\infty}, \: m\in \mathbb{N}.
\end{equation*}
It is well known that $H$ is bounded from $l_p$ into itself and $\|H\|=\pi \csc{\frac{\pi}{p}},$ see \cite{HLP}. Here
$$\|H\|=\sup_{a(\neq \theta)\in l_p}\frac{\|Ha\|_{p}}{\|a\|_p}.$$

It is natural to ask whether the Hilbert operator is still bounded from the weighted sequence space $l_{\alpha}^p$ into itself.   We see that it is the case for certain weighted sequence spaces,  and have the following
\begin{proposition}\label{pro-1}
Let $p>1$. If  $-1<\alpha<p-1$, then $H$ is bounded from  $l_{\alpha}^{p}$ into itself, and $\|H\|_{\alpha}=\pi \csc{\frac{\pi(1+\alpha)}{p}}$, where
$$\|H\|_{\alpha}=\sup_{a(\neq \theta)\in l_{\alpha}^p}\frac{\|Ha\|_{p, \alpha}}{\|a\|_{p, \alpha}}.$$
\end{proposition}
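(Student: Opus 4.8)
The plan is to prove the two-sided estimate by establishing $\|H\|_{\alpha}\le\pi\csc\frac{\pi(1+\alpha)}{p}$ via a H\"older (Schur-type) argument against power weights, and the reverse inequality $\|H\|_{\alpha}\ge\pi\csc\frac{\pi(1+\alpha)}{p}$ by testing $H$ on the sequences $a=\{n^{-s}\}$ with $s$ slightly above the critical exponent $\frac{1+\alpha}{p}$. Throughout one uses the Beta integral $\int_{0}^{\infty}\frac{t^{\,a-1}}{1+t}\,dt=\pi\csc\pi a$, valid for $0<a<1$.

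For the upper bound, since $|H(a)(m)|\le H(|a|)(m)$ and $\||a|\|_{p,\alpha}=\|a\|_{p,\alpha}$, I may assume $a_{n}\ge0$. Set $p'=\frac{p}{p-1}$, fix $\beta=\frac{1+\alpha}{p}$ (so $0<\beta<1$ by the hypothesis $-1<\alpha<p-1$), and put $\gamma=(p-1)\beta$. I would write
\[
\frac{a_{n}}{m+n}=\left(\frac{1}{m+n}\cdot\frac{m^{\beta}}{n^{\beta}}\right)^{1/p'}\left(\frac{1}{m+n}\cdot\frac{n^{\gamma}}{m^{\gamma}}\,a_{n}^{p}\right)^{1/p},
\]
where the exponents are arranged so that $\beta/p'=\gamma/p$, and apply H\"older's inequality with exponents $p'$ and $p$ to obtain
\[
H(a)(m)^{p}\le\left(\sum_{n=1}^{\infty}\frac{m^{\beta}}{(m+n)\,n^{\beta}}\right)^{p-1}\sum_{n=1}^{\infty}\frac{n^{\gamma}}{m^{\gamma}(m+n)}\,a_{n}^{p}.
\]
Since $x\mapsto\frac{m^{\beta}}{(m+x)x^{\beta}}$ is decreasing on $(0,\infty)$, the first sum is at most $\int_{0}^{\infty}\frac{m^{\beta}}{(m+x)x^{\beta}}\,dx=\int_{0}^{\infty}\frac{dt}{(1+t)\,t^{\beta}}=\pi\csc\pi\beta$ after the substitution $x=mt$. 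Multiplying by $m^{\alpha}$, summing in $m$, interchanging the order of summation, and comparing $\sum_{m\ge1}\frac{m^{\alpha-\gamma}}{m+n}$ with $\int_{0}^{\infty}\frac{x^{\alpha-\gamma}}{x+n}\,dx=n^{\alpha-\gamma}\,\pi\csc\pi(\gamma-\alpha)$ — legitimate because this choice of $\beta$ gives $\gamma-1<\alpha<\gamma$, so the integrand is decreasing and the Beta integral converges — yields
\[
\|H(a)\|_{p,\alpha}^{p}\le(\pi\csc\pi\beta)^{p-1}\,\bigl(\pi\csc\pi(\gamma-\alpha)\bigr)\,\|a\|_{p,\alpha}^{p}.
\]
With $\beta=\frac{1+\alpha}{p}$ one has $\gamma-\alpha=\frac{p-1-\alpha}{p}$, hence $\csc\pi(\gamma-\alpha)=\csc\pi\beta=\csc\frac{\pi(1+\alpha)}{p}$, and the right-hand side collapses to $\bigl(\pi\csc\frac{\pi(1+\alpha)}{p}\bigr)^{p}\|a\|_{p,\alpha}^{p}$, i.e.\ $\|H\|_{\alpha}\le\pi\csc\frac{\pi(1+\alpha)}{p}$. (In fact $\beta=\frac{1+\alpha}{p}$ is precisely the value minimizing $(\csc\pi\beta)^{1/p'}(\csc\pi(\gamma-\alpha))^{1/p}$ over the admissible range of $\beta$, which is why this choice produces a sharp constant.)

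For the reverse inequality I would fix a small $\varepsilon>0$, set $s=\frac{1+\alpha}{p}+\varepsilon$ and $a=\{n^{-s}\}_{n\ge1}$, so that $\|a\|_{p,\alpha}^{p}=\sum_{n\ge1}n^{-1-\varepsilon p}=\frac{1}{\varepsilon p}+O(1)$ as $\varepsilon\to0^{+}$. Bounding $H(a)(m)$ from below by $\int_{1}^{\infty}\frac{x^{-s}}{m+x}\,dx=m^{-s}\int_{1/m}^{\infty}\frac{t^{-s}}{1+t}\,dt\ge m^{-s}\bigl(\pi\csc\pi s-\frac{m^{s-1}}{1-s}\bigr)$, then invoking the elementary inequality $(u-v)^{p}\ge u^{p}-p\,u^{p-1}v$ for $0\le v\le u$ (valid once $m$ is large enough that the parenthesis is positive; the finitely many remaining $m$ contribute nonnegatively), and observing that the error sum $\sum_{m}m^{\alpha-sp+s-1}$ stays bounded as $\varepsilon\to0^{+}$, I would get $\|H(a)\|_{p,\alpha}^{p}\ge(\pi\csc\pi s)^{p}\bigl(\frac{1}{\varepsilon p}+O(1)\bigr)-O(1)$. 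Dividing by $\|a\|_{p,\alpha}^{p}$ and letting $\varepsilon\to0^{+}$ (so $s\to\frac{1+\alpha}{p}\in(0,1)$, where $\csc$ is continuous) forces $\|H\|_{\alpha}\ge\pi\csc\frac{\pi(1+\alpha)}{p}$; together with the upper bound this proves the proposition.

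The Beta-function evaluations and the monotone comparisons of sums with integrals are routine. The step requiring the most care is the sharpness half: one must keep track of the lower-order terms in both $\|H(a)\|_{p,\alpha}^{p}$ and $\|a\|_{p,\alpha}^{p}$ and carry out the limit $\varepsilon\to0^{+}$ in the right order, so that the constant $\pi\csc\frac{\pi(1+\alpha)}{p}$ is recovered exactly rather than up to a multiplicative factor.
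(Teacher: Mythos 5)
Your proposal is correct and follows essentially the same route as the paper: the upper bound is the same Schur-test/H\"older argument against power weights tuned so that both resulting Beta integrals equal $\pi\csc\frac{\pi(1+\alpha)}{p}$ (the paper splits the weights slightly differently but equivalently, via its quantities $W^{[1]}_{\alpha,\alpha}$ and $W^{[2]}_{\alpha,\alpha}$), and the lower bound uses the same test sequences $n^{-\frac{1+\alpha}{p}-\varepsilon}$ with a Bernoulli-type inequality to control the lower-order terms before letting $\varepsilon\to0^{+}$. The only cosmetic difference is that the paper proves the statement as the $g\equiv1$ case of its Theorem 3.2, which forces a truncation of the test sequence at an index $\mathcal{N}(\varepsilon)$ that your argument does not need.
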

\begin{remark}
This result is known in the literature, see \cite{Jin} for an equivalent form of Proposition \ref{pro-1}. We will establish an extension of this result in the last section.
\end{remark}

However, we find the Hilbert operator is not bounded from $l_{\alpha}^p$ into $l_{\beta}^{p}$, if $\alpha<\beta$ and $\alpha>-1$. Actually, if $\alpha<\beta$, let $\varepsilon>0$ and set
$a_n=(\frac{\varepsilon}{1+\varepsilon})^{\frac{1}{p}}n^{-\frac{\alpha+1+\varepsilon}{p}}.$
It is easy to see that $$\|a\|_{p, \alpha}=\frac{\varepsilon}{1+\varepsilon}\sum_{n=1}^{\infty}n^{-1-\varepsilon}<\frac{\varepsilon}{1+\varepsilon}(1+\int_{1}^{\infty}x^{-1-\varepsilon}dx)=1.$$

For $\alpha>-1$, we have
\begin{eqnarray}\|Ha\|_{p,\beta}^p&=&\frac{\varepsilon}{1+\varepsilon}\sum_{m=1}^{\infty}m^{\beta}\left (\sum_{n=1}^{\infty}\frac{1}{m+n}\cdot n^{-\frac{1+\alpha+\varepsilon}{p}}\right )^p\nonumber \\ &=&\frac{\varepsilon}{1+\varepsilon}
\sum_{m=1}^{\infty}m^{\beta-\alpha-1-\varepsilon}\left[\sum_{n=1}^{\infty}\frac{1}{m+n}\cdot \left(\frac{m}{n}\right)^{\frac{1+\alpha+\varepsilon}{p}}\right]^p \nonumber \\
&\geq & \frac{\varepsilon}{1+\varepsilon}
\sum_{m=1}^{\infty}m^{\beta-\alpha-1-\varepsilon}\left[ \int_{1}^{\infty} \frac{1}{m+x}\cdot \left(\frac{m}{x}\right)^{\frac{1+\alpha+\varepsilon}{p}}\,dx \right]^p \nonumber \\
&=&\frac{\varepsilon}{1+\varepsilon}
\sum_{m=1}^{\infty}m^{\beta-\alpha-1-\varepsilon}\left[ \int_{\frac{1}{m}}^{\infty} \frac{1}{1+t}\cdot \left(\frac{1}{t}\right)^{\frac{1+\alpha+\varepsilon}{p}}\,dt \right]^p \nonumber \\
&\geq &\frac{\varepsilon}{1+\varepsilon}\sum_{m=1}^{\infty}m^{\beta-\alpha-1-\varepsilon}\left[\int_{1}^{\infty} \frac{1}{1+t}\cdot \left(\frac{1}{t}\right)^{\frac{1+\alpha+\varepsilon}{p}}\,dt \right]^p \nonumber
\end{eqnarray}
If $H: l_{\alpha}^p\rightarrow l_{\beta}^p$ is bounded, then there exists a constant $C_1>0$ such that
\begin{equation}\label{c-1}C_1\geq\frac{\|Ha\|_{p, \beta}^p}{\|a\|_{p,\alpha}^p}\geq \frac{\varepsilon}{1+\varepsilon}\sum_{m=1}^{\infty}m^{\beta-\alpha-1-\varepsilon}\left[\int_{1}^{\infty} \frac{1}{1+t}\cdot \left(\frac{1}{t}\right)^{\frac{1+\alpha+\varepsilon}{p}}\,dt \right]^p .
\end{equation}

But when $\varepsilon<\beta-\alpha$, we see that $$\sum_{m=1}^{\infty}m^{\beta-\alpha-1-\varepsilon}=+\infty.$$
Hence we get that (\ref{c-1}) is a contradiction. This implies that the Hilbert operator is not bounded from $l_{\alpha}^p$ into $l_{\beta}^{p}$, if $\alpha<\beta$ and $\alpha>-1$.

Note that the Hilbert kernel can be written as
$$\frac{1}{m+n}=\int_{0}^{1}t^{m+n-1}dt.$$

Let $\mu$ be a positive Bore measure on $[0, 1)$, we define the generalized Hilbert series operator $H_{\mu}$ as
\begin{equation*} H_{\mu}(a)(m):=\sum_{n=1}^\infty \mu[m+n]a_n, \: a=\{a_n\}_{n=1}^{\infty}, \: m\in \mathbb{N},
\end{equation*}
where $$\mu[n]=\int_{0}^{1}t^{n-1}d\mu(t),\: n\in \mathbb{N}.$$

In this note, we first study the problem of characterizing the measures $\mu$ such that $H_{\mu}: l_{\alpha}^p\rightarrow l_{\beta}^p$ is bounded. We provide a sufficient and necessary condition of $\mu$ for which $H_{\mu}: l_{\alpha}^p\rightarrow l_{\beta}^p$ is bounded. It should be pointed out that there has been a lot of work in recent years on the action of the Hilbert operator and its generalizations in different analytic function spaces. See for example \cite{GP}, \cite{GM}, \cite{BW}, \cite{CGP}, \cite{GM-2}.

To state our first result, we introduce the notation of generalized Carleson measure on $[0, 1)$.  Let $s>0$, $\mu$ be a positive Borel measure on $[0,1)$. We say $\mu$ is a $s$-Carleson measure if there is a constant $C_2>0$ such that $$\mu([t, 1))\leq C_2 (1-t)^s$$ for all $t\in [0, 1)$.

We now state the first main result of this paper.
\begin{theorem}\label{main}
Let $p>1$. Let $\alpha, \beta$ be such that $-1<\alpha, \beta<p-1$. Then the following statements are equivalent:

{\bf (1)} $H_{\mu}: l_{\alpha}^p\rightarrow l_{\beta}^p$ is bounded.

{\bf(2)} $\mu$ is a $[1+\frac{1}{p}(\beta-\alpha)]$-Carleson measure on $[0, 1)$.

{\bf(3)} $\mu[n]=O(n^{-1-\frac{1}{p}(\beta-\alpha)})$.
\end{theorem}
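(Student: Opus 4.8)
The plan is to prove (2)\,$\Leftrightarrow$\,(3), then (3)\,$\Rightarrow$\,(1), then (1)\,$\Rightarrow$\,(3); write $s=1+\frac1p(\beta-\alpha)$ throughout and note that the standing hypotheses $-1<\alpha,\beta<p-1$ force $s>0$ and $\alpha-\beta<p$, facts used repeatedly below. The equivalence (2)\,$\Leftrightarrow$\,(3) is the familiar dictionary between an $s$-Carleson measure and the decay of its moments, which I would prove directly. For (2)\,$\Rightarrow$\,(3), writing $t^{n-1}=(n-1)\int_0^t u^{n-2}\,du$ for $n\ge 2$ and applying Tonelli's theorem gives $\mu[n]=(n-1)\int_0^1 u^{n-2}\mu((u,1))\,du$; inserting $\mu((u,1))\le C(1-u)^s$ and using $(n-1)B(n-1,s+1)=\Gamma(n)\Gamma(s+1)/\Gamma(n+s)\asymp n^{-s}$ yields $\mu[n]=O(n^{-s})$ (the case $n=1$ being the Carleson bound at $t=0$). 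For (3)\,$\Rightarrow$\,(2), from $\mu[n]\ge\int_t^1 u^{n-1}\,d\mu(u)\ge t^{n-1}\mu([t,1))$ one gets $\mu([t,1))\le t^{1-n}\mu[n]\le C\,t^{1-n}n^{-s}$, and taking $n\approx(1-t)^{-1}$ (so that $t^{1-n}=O(1)$) gives $\mu([t,1))=O((1-t)^s)$ for $t$ bounded away from $0$, while for $t$ near $0$ one simply uses $\mu([t,1))\le\mu[1]=O(1)$.

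For (3)\,$\Rightarrow$\,(1): since $|H_\mu a(m)|\le\sum_{n}\mu[m+n]\,|a_n|\le C\sum_n(m+n)^{-s}|a_n|$, it is enough to bound the positive kernel operator with kernel $(m+n)^{-s}$ from $l_\alpha^p$ to $l_\beta^p$. After the substitution $b_n=n^{\alpha/p}a_n$ this is the $l^p\to l^p$ boundedness of the matrix $L(m,n)=m^{\beta/p}(m+n)^{-s}n^{-\alpha/p}$, which is homogeneous of degree $-1$ precisely because of the value of $s$. I would run the Schur test (with $1/p+1/p'=1$) with weights $h(n)=n^{-\lambda}$: by homogeneity the two Schur sums $\sum_n L(m,n)h(n)^{p'}$ and $\sum_m L(m,n)h(m)^{p}$ reduce, after comparison with the corresponding convergent integrals $\int_0^\infty(1+u)^{-s}u^{-\gamma}\,du$, to constant multiples of $h(m)^{p'}$, respectively $h(n)^{p}$, as soon as $\lambda$ lies in an explicit open interval with endpoints built from $\alpha,\beta,p$. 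The one real point to check is that this interval is nonempty, and a short computation shows it is nonempty exactly under $-1<\alpha,\beta<p-1$ (indeed $\lambda$ can be chosen so that the two summands are monotone in the continuous variable, which legitimizes the sum–integral comparison); this bookkeeping is the main technical step of the proof, and the only place where all four strict inequalities in the hypothesis are genuinely used.

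For (1)\,$\Rightarrow$\,(3): test $H_\mu$ on the indicator sequence $a=\chi_{\{1,\dots,N\}}$. Then $\|a\|_{p,\alpha}^p=\sum_{n=1}^N n^\alpha\asymp N^{\alpha+1}$, while for $1\le m\le N$, since $\mu[\,\cdot\,]$ is non-increasing, $H_\mu a(m)=\sum_{k=m+1}^{m+N}\mu[k]\ge N\mu[m+N]\ge N\mu[2N]$, so that $\|H_\mu a\|_{p,\beta}^p\ge N^{p}\mu[2N]^p\sum_{m=1}^N m^\beta\asymp N^{p+\beta+1}\mu[2N]^p$. Boundedness of $H_\mu$ then forces $N^{p+\beta+1}\mu[2N]^p\lesssim N^{\alpha+1}$, i.e. $\mu[2N]=O(N^{-s})$ for every $N\ge1$; since $\mu[\,\cdot\,]$ is non-increasing this gives $\mu[n]=O(n^{-s})$, which closes the loop. (The two evaluations $\sum_{n=1}^N n^\alpha\asymp N^{\alpha+1}$ and $\sum_{m=1}^N m^\beta\asymp N^{\beta+1}$ are where $\alpha>-1$ and $\beta>-1$ enter.)
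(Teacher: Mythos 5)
Your proof is correct, and its sufficiency half is essentially the paper's argument: for (3)$\Rightarrow$(1) the paper carries out exactly your Schur test, writing out the H\"{o}lder splitting with one particular admissible weight exponent ($\lambda$ chosen so that the weight is $n^{(1+\alpha)/(pq)}$) and evaluating the two Schur sums by comparison with Beta integrals; its quantities $W^{[1]}_{\alpha,\beta}$ and $W^{[2]}_{\alpha,\beta}$ are precisely your two Schur sums. Where you genuinely diverge is in closing the cycle of implications. The paper proves (1)$\Rightarrow$(2) directly, by testing $H_\mu$ on the geometric sequences $\widetilde a_n=(1-b^2)^{1/p}n^{-\alpha/p}b^{2n/p}$, computing the relevant norms via the estimate $\sum_n n^{c-1}t^{2n}\asymp (1-t^2)^{-c}$ (quoted from Zhu), and discarding the mass of $\mu$ on $[0,b)$ to extract $\mu([b,1))\preceq (1-b^2)^{1+\frac1p(\beta-\alpha)}$; it therefore never needs the implication (3)$\Rightarrow$(2). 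You instead prove (1)$\Rightarrow$(3) by testing on indicator sequences and exploiting that $n\mapsto\mu[n]$ is non-increasing, and then supply the (standard, and absent from the paper) dictionary (3)$\Rightarrow$(2) via $t^{n-1}\mu([t,1))\le\mu[n]$ with $n\asymp(1-t)^{-1}$. Your route for the necessity direction is somewhat more elementary --- it needs only $\sum_{n\le N}n^{\gamma}\asymp N^{\gamma+1}$ and monotonicity of the moments, avoiding the imported growth estimate --- at the cost of one extra implication; the paper's test functions are tailored to produce the Carleson condition in a single step. The bookkeeping you flag (nonemptiness of the interval of admissible $\lambda$, and the $n=1$ case of (3)) checks out under the stated hypotheses.
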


We end this section by fixing some notations. We denote by $q$ the conjugate of $p$, i.e., $\frac{1}{p}+\frac{1}{q}=1$.  For two positive numbers $A, B$, we write
$A \preceq B$, or $A \succeq B$, if there exists a positive constant $C$ independent of $A$ and $B$ such that $A \leq C B$, or $A \geq C B$, respectively. We will write $A \asymp B$ if $A \preceq B$ and $A \succeq B$.
\section{Proof of Theorem \ref{main}}

In our proof of Theorem \ref{main}, we need the Beta function defined as follows.
$$B(u,v)=\int_{0}^{\infty}\frac{t^{u-1}}{(1+t)^{u+v}}\,dt,\: u>0,v>0.$$
It is known that
$$B(u,v)=\int_{0}^{1}t^{u-1}(1-t)^{v-1}\,dt=\frac{\Gamma(u)\Gamma{(v)}}{\Gamma(u+v)}. $$
and $B(u,v)=B(v,u)$, where $\Gamma$ is the Gamma function, defined as
$$\Gamma(x)=\int_{0}^{\infty}e^{-t} t^{x-1}\,dt,\: x>0.$$
For more detailed introduction to the Beta function and Gamma function, see \cite{W} .

For $-1<\alpha, \beta<p-1$, we define
\begin{equation*}
W_{\alpha, \beta}^{[1]}(n):=\sum_{m=1}^{\infty}\frac{1}{(m+n)^{1+\frac{1}{p}(\beta-\alpha)}}\cdot \frac{n^{\frac{1+\alpha}{q}}}{m^{1-\frac{1+\beta}{p}}},\: n\in \mathbb{N},
\end{equation*}
and
\begin{equation*}
W_{\alpha, \beta}^{[2]}(m):=\sum_{n=1}^{\infty}\frac{1}{(m+n)^{1+\frac{1}{p}(\beta-\alpha)}}\cdot \frac{m^{(q-1)(1-\frac{1+\beta}{p})}}{n^{\frac{1+\alpha}{p}}},\: m\in \mathbb{N}.
\end{equation*}

Since $-1<\alpha, \beta<p-1$, we see that
\begin{eqnarray}\label{w-11}
W_{\alpha, \beta}^{[1]}(n)&\leq & \int_{0}^{\infty}\frac{1}{(x+n)^{1+\frac{1}{p}(\beta-\alpha)}}\cdot \frac{n^{\frac{1+\alpha}{q}}}{x^{1-\frac{1+\beta}{p}}}\,dx \\
&=& B(\frac{1+\beta}{p}, 1-\frac{1+\alpha}{p})n^{\alpha}.\nonumber
\end{eqnarray}
Similarly, we can show that
\begin{equation}\label{w-22}
W_{\alpha, \beta}^{[2]}(m)\leq B(\frac{1+\beta}{p}, 1-\frac{1+\alpha}{p})m^{(1-q)\beta}.
\end{equation}

Now, we start to prove Theorem \ref{main}. We first show

{\bf (2)$\Rightarrow$(3)}.  We note that {\bf(3)} is obvious when $n=1$.  We get from integration by parts that, for $n(\geq 2)\in \mathbb{N}$,
\begin{eqnarray}
\mu[n]=\int_{0}^1 t^{n-1}d\mu(t)&=&\mu([0,1))-(n-1)\int_{0}^1 t^{n-2}\mu([0, t))dt \nonumber \\
&=& (n-1)\int_{0}^1 t^{n-2}\mu([t, 1))dt.\nonumber
\end{eqnarray}

Since $\mu$ is a $[1+\frac{1}{p}(\beta-\alpha)]$-Carleson measure on $[0, 1)$, then we see that there is a constant $C_3>0$ such that
$$\mu([t,1))\leq C_3 (1-t)^{1+\frac{1}{p}(\beta-\alpha)},$$
for all $t\in [0,1).$

It follows that
\begin{eqnarray}\mu[n] &\leq & C_3 (n-1)\int_{0}^1 t^{n-2}(1-t)^{1+\frac{1}{p}(\beta-\alpha)}dt\nonumber \\&=&C_3(n-1)\cdot\frac{\Gamma(n-1)\Gamma(2+\frac{1}{p}(\beta-\alpha))}{\Gamma(n+1+\frac{1}{p}(\beta-\alpha))}\nonumber \\ &\asymp & \frac{1}{n^{1+\frac{1}{p}(\beta-\alpha)}}.\nonumber \end{eqnarray}
Here we have used the fact that
$$\Gamma(x) = \sqrt{2\pi} x^{x-\frac{1}{2}}e^{-x}[1+r(x)],\,x>0, $$
where $|r(x)|\leq e^{\frac{1}{12x}}-1.$ Hence {\bf (2)$\Rightarrow$(3)} is true.

{\bf (3)$\Rightarrow$(1)}. Take $a=\{a_n\}_{n=1}^{\infty} \in l_{\alpha}^p$ and assume, without loss of generality, that $a_n \geq 0, \, n\in \mathbb{N}$.  By H\"{o}lder's inequality and (\ref{w-22}), we see from
$$\mu[m+n]=O\left(\frac{1}{(m+n)^{1+\frac{1}{p}(\beta-\alpha)}}\right)$$ that, for $m\in \mathbb{N}$,
\begin{eqnarray}\label{g1}
\lefteqn{\left |\sum_{n=1}^{\infty} \mu[m+n]a_n\right|\preceq\left |\sum_{n=1}^{\infty}\frac{a_n}{(m+n)^{1+\frac{1}{p}(\beta-\alpha)}}\right|}  \nonumber \\
&=&\sum_{n=1}^{\infty} \left\{[\frac{1}{(m+n)^{1+\frac{1}{p}(\beta-\alpha)}}]^{\frac{1}{p}}\cdot \frac{n^{\frac{1+\alpha}{pq}}}{m^{\frac{1}{p}(1-\frac{1+\beta}{p})}}\cdot a_n\right\}
\left\{[\frac{1}{(m+n)^{1+\frac{1}{p}(\beta-\alpha)}}]^{\frac{1}{q}}\cdot \frac{m^{\frac{1}{p}(1-\frac{1+\beta}{p})}}{n^{\frac{1+\alpha}{pq}}}\right\}
\nonumber \\
&\leq & \left[ \sum_{n=1}^{\infty}\frac{1}{(m+n)^{1+\frac{1}{p}(\beta-\alpha)}}\cdot \frac{n^{\frac{1+\alpha}{q}}}{m^{1-\frac{1+\beta}{p}}}\cdot a_n^{p}\right]^{\frac{1}{p}}
\left[\sum_{n=1}^{\infty}\frac{1}{(m+n)^{1+\frac{1}{p}(\beta-\alpha)}}\cdot \frac{m^{(q-1)(1-\frac{1+\beta}{p})}}{n^{\frac{1+\alpha}{p}}}\right]^{\frac{1}{q}}
\nonumber \\
&=&
[W_{\alpha, \beta}^{[2]}(m)]^{\frac{1}{q}}\left[ \sum_{n=1}^{\infty}\frac{1}{(m+n)^{1+\frac{1}{p}(\beta-\alpha)}}\cdot \frac{n^{\frac{1+\alpha}{q}}}{m^{1-\frac{1+\beta}{p}}}\cdot a_n^{p}\right]^{\frac{1}{p}}
\nonumber \\
&=&
[B(\frac{1+\beta}{p}, 1-\frac{1+\alpha}{p})]^{\frac{1}{q}} m^{-\frac{\beta}{p}}\left[\sum_{n=1}^{\infty}\frac{1}{(m+n)^{1+\frac{1}{p}(\beta-\alpha)}}\cdot \frac{n^{\frac{1+\alpha}{q}}}{m^{1-\frac{1+\beta}{p}}}\cdot a_n^{p}\right]^{\frac{1}{p}}. \nonumber
\end{eqnarray}

Consequently, we obtain from (\ref{w-11}) that
\begin{eqnarray}
\lefteqn{\|H_{\mu}a\|_{p, \beta}=\left[\sum_{m=1}^{\infty}m^{\beta}\left |\sum_{n=1}^{\infty} \mu[m+n]a_n\right|^p\right]^{\frac{1}{p}} \preceq \left[ \sum_{m=1}^{\infty}m^{\beta}\left|\sum_{n=1}^{\infty}\frac{a_n}{(m+n)^{1+\frac{1}{p}(\beta-\alpha)}}\right|^{p}\right]^{\frac{1}{p}}}\nonumber \\
&&\leq [B(\frac{1+\beta}{p}, 1-\frac{1+\alpha}{p})]^{\frac{1}{q}}\left[\sum_{m=1}^{\infty}\sum_{n=1}^{\infty}\frac{1}{(m+n)^{1+\frac{1}{p}(\beta-\alpha)}}\cdot \frac{n^{\frac{1+\alpha}{q}}}{m^{1-\frac{1+\beta}{p}}}\cdot a_n^{p}\right]^{\frac{1}{p}}
\nonumber \\
&&=[B(\frac{1+\beta}{p}, 1-\frac{1+\alpha}{p})]^{\frac{1}{q}}\left[\sum_{n=1}^{\infty}W_{\alpha, \beta}^{[1]}(n)a_n^{p}\right]^{\frac{1}{p}} \nonumber \\
&&\leq B(\frac{1+\beta}{p}, 1-\frac{1+\alpha}{p})\|a\|_{p, \alpha}.
\nonumber
\end{eqnarray}
This proves {\bf (3)$\Rightarrow$(1)}.

{\bf (1)$\Rightarrow$(2)}.  We need the following estimate given in \cite{Zh}.  Let $0<t<1$. For any $c>0$, we have
\begin{equation}\label{est}\sum_{n=1}^{\infty}n^{c-1}t^{2n}\asymp \frac{1}{(1-t^2)^c}.
\end{equation}

For $0<b<1$, we set $$\widetilde{a}_n=(1-b^2)^{\frac{1}{p}}n^{-\frac{\alpha}{p}}b^{\frac{2n}{p}},\; n\in \mathbb{N}.$$ Then we see from (\ref{est}) that $\|\widetilde{a}\|_{p, \alpha}\asymp 1.$ In view of the boundedness of $H_{\mu}: l_{\alpha}^p\rightarrow l_{\beta}^p$, we obtain that
\begin{eqnarray}
1 &\succeq & \|H_{\mu}\widetilde{a}\|_{p, \beta}^p =\sum_{m=1}^{\infty}m^{\beta}\left |\sum_{n=1}^{\infty}\widetilde{a}_n\int_{0}^1t^{m+n-1}d\mu(t) \right|^p\nonumber \\
&=&(1-b^2)\sum_{m=1}^{\infty}m^{\beta}\left[\sum_{n=1}^{\infty}n^{-\frac{\alpha}{p}}b^{\frac{2n}{p}}\int_{0}^{1}t^{m+n-1}d\mu(t)\right]^{{p}}
\nonumber \\
&\geq &(1-b^2)\sum_{m=1}^{\infty}m^{\beta}\left[\sum_{n=1}^{\infty}n^{-\frac{\alpha}{p}}b^{\frac{2n}{p}}\int_{b}^{1}t^{m+n-1}d\mu(t))\right]^{{p}} \nonumber \\
&\geq &(1-b^2)[\mu([b, 1))]^{p}\sum_{m=1}^{\infty}m^{\beta}\left(\sum_{n=1}^{\infty}n^{-\frac{\alpha}{p}}b^{\frac{2n}{p}}\cdot b^{m+n-1}\right)^{{p}}\nonumber \\
&=&(1-b^2)[\mu([b, 1))]^{p}\left(\sum_{m=1}^{\infty}m^{\beta}b^{m}\right)\left(\sum_{n=1}^{\infty}n^{-\frac{\alpha}{p}}b^{\frac{2n}{p}+n-1}\right)^{{p}}\nonumber \\
&\asymp & (1-b^2)[\mu([b, 1))]^{p}\cdot \frac{1}{(1-b^2)^{1+\beta}} \cdot\frac{1}{(1-b^2)^{p-\alpha}}.
\nonumber
\end{eqnarray}
This implies that
$$\mu([b, 1))\preceq (1-b^2)^{1+\frac{1}{p}(\beta-\alpha)},$$
for all $0<b<1$. It follows that $\mu$ is a $[1+\frac{1}{p}(\beta-\alpha)]$-Carleson measure on $[0, 1)$ and {\bf (1)$\Rightarrow$(2)} is proved.
The proof of Theorem \ref{main} is now finished.

\section{New generalized Hilbert series inequalities}
In this section, we consider certain $1$-Carleson measures and study a generalized Hilbert series operator induced by a bounded function on $[0, 1)$.
As applications, we establish some new generalized Hilbert series inequalities with the best constant factors.

Let $g$ be a non-negative and non-decreasing bounded function on $[0, 1)$.  We further assume that $\|g\|_{\infty}>0$ and set
$$\Lambda_g[n]:= \int_{0}^1 t^{n-1}g(t)\,dt, \, n\in \mathbb{N}.$$

We define the generalized Hilbert series operator $H_g$ as
$$H_g(a)(m)=\sum_{n=1}^{\infty}\Lambda_g[m+n]a_n =\sum_{n=1}^{\infty} a_n \int_{0}^1 t^{m+n-1}g(t)\,dt,\: a=\{a_{n}\}_{n=1}^{\infty},\: m\in \mathbb{N}.$$

\begin{remark}
When $g\equiv1$, $H_g$ becomes the classical Hilbert series operator.  We see from the fact that $g$ is a non-negative bounded function on $[0, 1)$ that $g(t)dt$ is  a $1$-Carleson measure on $[0, 1)$.  Then, by Theorem \ref{main}, we know that $H_{g}: l_{\alpha}^p\rightarrow l_{\alpha}^p$ is bounded if $-1<\alpha<p-1$.  Moreover, we shall show the following result.
\end{remark}

\begin{theorem}\label{main-1}
Let $p>1, -1<\alpha<p-1$. Let $g, H_g$ be as above.  Then we have  $H_{g}: l_{\alpha}^p\rightarrow l_{\alpha}^p$ is bounded, and $\|H_g\|_{\alpha}=\|g\|_{\infty}\pi \csc{\frac{\pi(1+\alpha)}{p}}$, where
$$\|H_g\|_{\alpha}=\sup_{a(\neq \theta)\in l_{\alpha}^p}\frac{\|H_ga\|_{p, \alpha}}{\|a\|_{p, \alpha}}.$$
\end{theorem}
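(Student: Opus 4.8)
The plan is to prove the matching bounds $\|H_g\|_{\alpha}\le\|g\|_{\infty}\pi\csc\frac{\pi(1+\alpha)}{p}$ and $\|H_g\|_{\alpha}\ge\|g\|_{\infty}\pi\csc\frac{\pi(1+\alpha)}{p}$ separately. The upper bound is a soft pointwise-domination argument: since $0\le g(t)\le\|g\|_{\infty}$ on $[0,1)$, we have $\Lambda_g[k]=\int_0^1 t^{k-1}g(t)\,dt\le\|g\|_{\infty}\int_0^1 t^{k-1}\,dt=\frac{\|g\|_{\infty}}{k}$ for every $k\in\mathbb{N}$, so $|H_g(a)(m)|\le H_g(|a|)(m)\le\|g\|_{\infty}H(|a|)(m)$ for any sequence $a$. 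As $\||a|\|_{p,\alpha}=\|a\|_{p,\alpha}$ and $-1<\alpha<p-1$, Proposition~\ref{pro-1} gives $\|H_g a\|_{p,\alpha}\le\|g\|_{\infty}\|H\|_{\alpha}\|a\|_{p,\alpha}=\|g\|_{\infty}\pi\csc\frac{\pi(1+\alpha)}{p}\|a\|_{p,\alpha}$, which simultaneously reproves boundedness and yields one inequality.

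For the lower bound, which I expect to be the heart of the matter, I would argue by test sequences in the spirit of the sharpness proof for the classical Hilbert inequality. Fix $\delta\in(0,\|g\|_{\infty})$; by monotonicity of $g$ there is $t_0=t_0(\delta)\in[0,1)$ with $g(t)\ge\|g\|_{\infty}-\delta$ on $[t_0,1)$, so for indices with $m,n\ge N$ (hence $m+n\ge 2N$),
$$\Lambda_g[m+n]\ge(\|g\|_{\infty}-\delta)\int_{t_0}^1 t^{m+n-1}\,dt=(\|g\|_{\infty}-\delta)\frac{1-t_0^{m+n}}{m+n}\ge(\|g\|_{\infty}-\delta)(1-t_0^{2N})\frac{1}{m+n}.$$
I then test $H_g$ on $a^{(\varepsilon)}=\{n^{-\frac{1+\alpha}{p}-\varepsilon}\}_{n\ge1}$ for small $\varepsilon>0$, which lies in $l_{\alpha}^p$ with $\|a^{(\varepsilon)}\|_{p,\alpha}^p=\sum_{n\ge1}n^{-1-p\varepsilon}$. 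Discarding all terms with $m<M$ or $n<N$ (with $M\ge N$), applying $\sum_{n\ge N}h(n)\ge\int_N^\infty h(x)\,dx$ to the decreasing function $h(x)=x^{-\frac{1+\alpha}{p}-\varepsilon}/(m+x)$, and substituting $x=mt$, one gets $\sum_{n\ge N}\frac{a^{(\varepsilon)}_n}{m+n}\ge m^{-\frac{1+\alpha}{p}-\varepsilon}\int_{N/M}^\infty\frac{t^{-\frac{1+\alpha}{p}-\varepsilon}}{1+t}\,dt$ for all $m\ge M$. Raising this to the $p$-th power, multiplying by $m^{\alpha}$ and summing over $m\ge M$, and using $(\frac{1+\alpha}{p}+\varepsilon)p-\alpha=1+p\varepsilon$, we obtain
$$\frac{\|H_g a^{(\varepsilon)}\|_{p,\alpha}^p}{\|a^{(\varepsilon)}\|_{p,\alpha}^p}\ge(\|g\|_{\infty}-\delta)^p(1-t_0^{2N})^p\left(\int_{N/M}^\infty\frac{t^{-\frac{1+\alpha}{p}-\varepsilon}}{1+t}\,dt\right)^{p}\frac{\sum_{m\ge M}m^{-1-p\varepsilon}}{\sum_{n\ge1}n^{-1-p\varepsilon}}.$$

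Finally I would pass to the limit in the order $\varepsilon\to0^+$, then $M\to\infty$, then $N\to\infty$, then $\delta\to0^+$. As $\varepsilon\to0^+$ the last quotient tends to $1$, since its numerator and denominator differ by the fixed finite sum $\sum_{n=1}^{M-1}n^{-1-p\varepsilon}$ while both diverge, and the integral tends to $\int_{N/M}^\infty\frac{t^{-(1+\alpha)/p}}{1+t}\,dt$; letting $M\to\infty$ this becomes $\int_0^\infty\frac{t^{-(1+\alpha)/p}}{1+t}\,dt=B\!\left(1-\tfrac{1+\alpha}{p},\tfrac{1+\alpha}{p}\right)=\Gamma\!\left(\tfrac{1+\alpha}{p}\right)\Gamma\!\left(1-\tfrac{1+\alpha}{p}\right)=\pi\csc\frac{\pi(1+\alpha)}{p}$ by Euler's reflection formula, which is legitimate because $-1<\alpha<p-1$ forces $\frac{1+\alpha}{p}\in(0,1)$; letting $N\to\infty$ removes the factor $(1-t_0^{2N})^p\to1$; and $\delta\to0^+$ then gives $\|H_g\|_{\alpha}\ge\|g\|_{\infty}\pi\csc\frac{\pi(1+\alpha)}{p}$, completing the proof when combined with the upper bound. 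The only genuinely delicate point is securing a lower bound for the inner sum $\sum_n\Lambda_g[m+n]a^{(\varepsilon)}_n$ that is uniform enough in $m$ to survive summation against $m^{\alpha}$; the truncations $m\ge M$, $n\ge N$ together with the monotonicity of $g$ are exactly what achieve this, after which it only remains to check that each accumulated error factor tends to $1$ in the iterated limit.
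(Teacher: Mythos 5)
Your proposal is correct, and for the lower bound it is essentially the paper's argument: the same family of test sequences $n^{-\frac{1+\alpha}{p}-\varepsilon}$, the same use of the monotonicity of $g$ to force $\Lambda_g[m+n]\geq(\|g\|_{\infty}-\delta)(1-t_0^{\,\cdot})\frac{1}{m+n}$ on a tail, and the same integral comparison leading to $\int_0^\infty\frac{t^{-(1+\alpha)/p}}{1+t}\,dt=\pi\csc\frac{\pi(1+\alpha)}{p}$. The difference is bookkeeping: you truncate both sums ($m\geq M$, $n\geq N$) and pass to an iterated limit $\varepsilon\to0^+$, $M\to\infty$, $N\to\infty$, $\delta\to0^+$ (which is legitimate since each intermediate bound holds for the fixed constant $\|H_g\|_{\alpha}$ uniformly in the remaining parameters), whereas the paper works with a single parameter $\tau$, a test sequence supported on $n>\mathcal{N}$ normalized in advance, and controls the error explicitly via Bernoulli's inequality and the quantity $F_{p,\alpha}(\mathcal{N},\tau)$. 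Your version avoids that explicit error analysis and is arguably cleaner. For the upper bound you genuinely diverge: you dominate $\Lambda_g[k]\leq\|g\|_{\infty}/k$ and invoke Proposition~\ref{pro-1}, while the paper reruns the H\"older/Beta-function computation with the weights $W^{[1]}_{\alpha,\alpha}$, $W^{[2]}_{\alpha,\alpha}$ so as to be self-contained. Your shortcut is valid since Proposition~\ref{pro-1} is a known result (cited to the literature), but be aware that the paper also presents Proposition~\ref{pro-1} as the $g\equiv1$ special case of this very theorem; if one wanted the whole development self-contained, the H\"older argument (or an independent proof of Proposition~\ref{pro-1}) cannot actually be dispensed with.
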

\begin{remark}
Proposition \ref{pro-1} follows if we take $g\equiv1$.
\end{remark}
It follows from Theorem \ref{main-1} that
\begin{corollary}
Under the assumptions and with the notations of Theorem \ref{main-1}, we have the following generalized Hilbert inequality
\begin{equation}\label{last}\left[\sum_{m=1}^{\infty} m^{\alpha} \left(\sum_{n=1}^{\infty} a_n \int_{0}^1 t^{m+n-1}g(t)\,dt\right)^p \right]^{\frac{1}{p}} \leq \|g\|_{\infty}\pi \csc{\frac{\pi(1+\alpha)}{p}} \|a\|_{p, \alpha},\end{equation}
holds for all $a\in l_{\alpha}^p$, and the constant factor $\|g\|_{\infty}\pi \csc{\frac{\pi(1+\alpha)}{p}}$ in (\ref{last}) is the best possible.
\end{corollary}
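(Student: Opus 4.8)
The plan is to prove Theorem~\ref{main-1} in two halves: first the upper bound $\|H_g\|_\alpha \le \|g\|_\infty \pi\csc\frac{\pi(1+\alpha)}{p}$, then the matching lower bound. For the upper bound I would run the Hölder-inequality argument from the proof of {\bf (3)$\Rightarrow$(1)} in Theorem~\ref{main}, but now keeping track of exact constants rather than using $\preceq$. Here $\beta=\alpha$, so the kernel exponent $1+\frac1p(\beta-\alpha)$ collapses to $1$, and $\Lambda_g[m+n]=\int_0^1 t^{m+n-1}g(t)\,dt \le \|g\|_\infty \int_0^1 t^{m+n-1}\,dt = \frac{\|g\|_\infty}{m+n}$. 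So $H_g(a)(m) \le \|g\|_\infty H(a)(m)$ pointwise for non-negative $a$, and the upper bound follows immediately from Proposition~\ref{pro-1}: $\|H_g a\|_{p,\alpha} \le \|g\|_\infty \|Ha\|_{p,\alpha} \le \|g\|_\infty \pi\csc\frac{\pi(1+\alpha)}{p}\|a\|_{p,\alpha}$. This also handles the case $g\equiv 1$ reducing to Proposition~\ref{pro-1}, as noted in the remark.

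The substance is the lower bound, showing the constant cannot be improved. The natural test sequences are $a_n = n^{-\frac{\alpha+1+\varepsilon}{p}}$ for small $\varepsilon>0$, the same family that appears in the introduction's non-boundedness argument and in the classical proof that $\pi\csc\frac{\pi(1+\alpha)}{p}$ is sharp for $H$ on $l^p_\alpha$. One computes $\|a\|_{p,\alpha}^p = \sum_n n^{-1-\varepsilon} \asymp \frac{1}{\varepsilon}$ as $\varepsilon\to 0^+$. For the numerator I would bound $H_g(a)(m)$ from below: the difficulty is that $g$ is only assumed non-negative, non-decreasing, and bounded, so I cannot replace $g$ by $\|g\|_\infty$ from below. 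Instead I fix $\delta\in(0,1)$ and use $g(t)\ge g(\delta)$ for $t\in[\delta,1)$, so $\Lambda_g[m+n] \ge g(\delta)\int_\delta^1 t^{m+n-1}\,dt = g(\delta)\frac{1-\delta^{m+n}}{m+n}$. The term $\frac{1}{m+n}$ gives, after the integral comparison $\sum_n \frac{n^{-(\alpha+1+\varepsilon)/p}}{m+n} \ge \int_1^\infty \frac{x^{-(\alpha+1+\varepsilon)/p}}{m+x}\,dx = m^{-(\alpha+1+\varepsilon)/p}\int_{1/m}^\infty \frac{t^{-(\alpha+1+\varepsilon)/p}}{1+t}\,dt$, a lower bound whose leading behaviour as $\varepsilon\to 0$ is governed by $B\!\left(1-\frac{1+\alpha}{p},\frac{1+\alpha}{p}\right) = \pi\csc\frac{\pi(1+\alpha)}{p}$; the correction term coming from $\delta^{m+n}$ and from the truncation of the integral near $0$ must be shown to be negligible in the limit. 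Then $\|H_g a\|_{p,\alpha}^p \succeq g(\delta)^p \cdot (\pi\csc\frac{\pi(1+\alpha)}{p})^p \cdot \frac{1}{\varepsilon}(1+o(1))$, so $\frac{\|H_g a\|_{p,\alpha}}{\|a\|_{p,\alpha}} \ge g(\delta)\pi\csc\frac{\pi(1+\alpha)}{p}(1+o(1))$ as $\varepsilon\to 0^+$. Letting $\delta\to 1^-$ and using that $g$ is non-decreasing with $\|g\|_\infty = \lim_{\delta\to 1^-}g(\delta) = \sup_{[0,1)}g$ gives $\|H_g\|_\alpha \ge \|g\|_\infty\pi\csc\frac{\pi(1+\alpha)}{p}$, completing the proof.

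I expect the main obstacle to be the careful bookkeeping of error terms in the lower bound: one must interchange the order of taking $\varepsilon\to 0^+$ and $\delta\to 1^-$ correctly, and control both the $\delta^{m+n}$ factor and the $\int_{1/m}^\infty$-versus-$\int_0^\infty$ discrepancy uniformly enough that the ratio of sums really converges to the Beta-function value. A clean way to organize this is to first fix $\delta$, establish $\liminf_{\varepsilon\to 0^+}\frac{\|H_g a_\varepsilon\|_{p,\alpha}}{\|a_\varepsilon\|_{p,\alpha}} \ge g(\delta)\pi\csc\frac{\pi(1+\alpha)}{p}$ via a Fatou/dominated-convergence style argument on the $m$-sum (or simply the crude two-sided estimates $\|a\|_{p,\alpha}^p \le \frac{1}{\varepsilon} + 1$ and a lower bound on finitely many leading $m$-terms), and only then send $\delta\to 1$. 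The corollary is then immediate: inequality~(\ref{last}) is just $\|H_g a\|_{p,\alpha}\le \|g\|_\infty\pi\csc\frac{\pi(1+\alpha)}{p}\|a\|_{p,\alpha}$ written out, and sharpness of the constant is exactly the lower-bound statement of Theorem~\ref{main-1}.
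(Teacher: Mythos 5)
Your proposal is correct and follows essentially the same route as the paper: the upper bound via the weighted H\"older argument (with $\beta=\alpha$, reducing the kernel to $\|g\|_\infty/(m+n)$ and the Beta-function value $\pi\csc\frac{\pi(1+\alpha)}{p}$), and the lower bound by using monotonicity of $g$ to get $g\ge\|g\|_\infty-\varepsilon$ near $t=1$, testing against power-law sequences $n^{-(1+\alpha+\tau)/p}$, and comparing to the Beta integral with controlled error terms. The paper organizes the error bookkeeping you flag as the main obstacle by truncating the test sequence at an index $\mathcal{N}(\varepsilon)$ (so that $1-j_\varepsilon^{m+n}$ is absorbed into the $\varepsilon$-loss) and quantifying the tail discrepancies via Bernoulli's inequality, but this is the same argument in slightly different clothing.
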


\begin{proof}[Proof of Theorem \ref{main-1}]
For $a=\{a_n\}_{n=1}^{\infty} \in l_{\alpha}^p, \, a_n\geq 0, \, n \in \mathbb{N}$, by H\"{o}lder's inequality  and (\ref{w-22}), we obtain that, for $m\in \mathbb{N}$,
\begin{eqnarray}\label{g1}
\lefteqn{\left |\sum_{n=1}^{\infty} \Lambda_g[m+n]a_n\right|=\left|\sum_{n=1}^{\infty} a_n \int_{0}^1 t^{m+n-1}g(t)\,dt\right |}  \nonumber \\
&\leq&\|g\|_{\infty}\sum_{n=1}^{\infty} \left\{[\frac{1}{m+n}]^{\frac{1}{p}}\cdot \frac{n^{\frac{1+\alpha}{pq}}}{m^{\frac{1}{p}(1-\frac{1+\alpha}{p})}}\cdot a_n\right\}
\left\{[\frac{1}{m+n}]^{\frac{1}{q}}\cdot \frac{m^{\frac{1}{p}(1-\frac{1+\alpha}{p})}}{n^{\frac{1+\alpha}{pq}}}\right\}
\nonumber \\
&\leq &\|g\|_{\infty} \left[ \sum_{n=1}^{\infty}\frac{1}{m+n}\cdot \frac{n^{\frac{1+\alpha}{q}}}{m^{1-\frac{1+\alpha}{p}}}\cdot a_n^{p}\right]^{\frac{1}{p}}
\left[\sum_{n=1}^{\infty}\frac{1}{m+n}\cdot \frac{m^{(q-1)(1-\frac{1+\alpha}{p})}}{n^{\frac{1+\alpha}{p}}}\right]^{\frac{1}{q}}
\nonumber \\
&=& \|g\|_{\infty}
[W_{\alpha, \alpha}^{[2]}(m)]^{\frac{1}{q}}\left[ \sum_{n=1}^{\infty}\frac{1}{m+n}\cdot \frac{n^{\frac{1+\alpha}{q}}}{m^{1-\frac{1+\alpha}{p}}}\cdot a_n^{p}\right]^{\frac{1}{p}}
\nonumber \\
&\leq & \|g\|_{\infty}
[\pi\csc\frac{\pi(1+\alpha)}{p}]^{\frac{1}{q}} m^{-\frac{\alpha}{p}}\left[\sum_{n=1}^{\infty}\frac{1}{m+n}\cdot \frac{n^{\frac{1+\alpha}{q}}}{m^{1-\frac{1+\alpha}{p}}}\cdot a_n^{p}\right]^{\frac{1}{p}}. \nonumber
\end{eqnarray}
Here we have used the fact that $B(s, 1-s)=\pi \csc\pi s$ when $0<s<1.$

Consequently, we get from (\ref{w-11}) that
\begin{eqnarray}
\lefteqn{\|H_{g}a\|_{p, \alpha}=\left[\sum_{m=1}^{\infty}m^{\beta}\left |\sum_{n=1}^{\infty}\Lambda_g[m+n]a_n\right|^p\right]^{\frac{1}{p}}} \nonumber \\
&&\leq \|g\|_{\infty}[\pi\csc\frac{\pi(1+\alpha)}{p}]^{\frac{1}{q}}\left[\sum_{m=1}^{\infty}\sum_{n=1}^{\infty}\frac{1}{m+n}\cdot \frac{n^{\frac{1+\alpha}{q}}}{m^{1-\frac{1+\alpha}{p}}}\cdot a_n^{p}\right]^{\frac{1}{p}}
\nonumber \\
&&=\|g\|_{\infty}[\pi\csc\frac{\pi(1+\alpha)}{p}]^{\frac{1}{q}}\left[\sum_{n=1}^{\infty}W_{\alpha, \alpha}^{[1]}(n)a_n^{p}\right]^{\frac{1}{p}} \nonumber \\
&&\leq \|g\|_{\infty}\pi\csc\frac{\pi(1+\alpha)}{p}\|a\|_{p, \alpha}.
\nonumber
\end{eqnarray}
This proves that $H_{g}: l_{\alpha}^p\rightarrow l_{\alpha}^p$ is bounded and $\|H_g\|_{\alpha}\leq \|g\|_{\infty}\pi \csc{\frac{\pi(1+\alpha)}{p}}$.

Finally, we prove that $\|H_g\|_{\alpha}=\|g\|_{\infty}\pi \csc{\frac{\pi(1+\alpha)}{p}}$. For any $\varepsilon\in (0, \|g\|_{\infty})$, we see from the fact that $g$ is non-decreasing on $[0, 1)$ that there is a constant $j_{\varepsilon}\in (0, 1)$ such that
$$g(t)\geq \|g\|_{\infty}-\frac{1}{2}\varepsilon$$
for all $t\in[j_{\varepsilon},1).$ It follows that
\begin{eqnarray}\label{je}\Lambda_g[m+n]&\geq& (\|g\|_{\infty}-\frac{1}{2}\varepsilon)\int_{j_{\varepsilon}}^1 t^{m+n-1}\,dt=\frac{\|g\|_{\infty}-\frac{1}{2}\varepsilon}{m+n}(1-j_{\varepsilon}^{m+n}) \\
&=& \frac{\|g\|_{\infty}-\varepsilon}{m+n}\left[1+\frac{\varepsilon}{2(\|g\|_{\infty}-\varepsilon)}\right](1-j_{\varepsilon}^{m+n}).\nonumber\end{eqnarray}

For all $m\in \mathbb{N}$, since $j_{\varepsilon}^{m+n} \leq j_{\varepsilon}^{n}$, and $j_{\varepsilon}^{n} \rightarrow 0$($ n \rightarrow \infty$), we conclude from (\ref{je}) that there is a $\mathcal{N}=\mathcal{N}(\varepsilon) \in \mathbb{N}$ such that
\begin{equation}\Lambda_g[m+n]\geq \frac{\|g\|_{\infty}-\varepsilon}{m+n}\end{equation}
for all $n>\mathcal{N}$, and all $m\in \mathbb{N}.$

Let $\tau>0$, we set $\widehat{a}_n=0$ when $n\in [1, \mathcal{N}]$, $\widehat{a}_n=(\tau \mathcal{N}^{\tau})^{\frac{1}{p}}n^{-\frac{1+\alpha+\tau}{p}}$ when $n>\mathcal{N}.$ It is easy to see that
$$\|\widehat{a}\|_{p, \alpha}^p=\tau \mathcal{N}^{\tau}\sum_{n=\mathcal{N}+1}^{\infty}n^{-1-\tau}\leq \tau\mathcal{N}^{\tau} \int_{\mathcal{N}}^{\infty}x^{-1-\tau}\,dx=1.$$

Then it follows that
\begin{eqnarray}\label{eq-1}
\|H_g\|_{\alpha}&\geq& \|H_{g}\widehat{a}\|_{p, \alpha}=\left[\sum_{m=1}^{\infty}m^{\alpha}\left|\sum_{n=1}^{\infty}\Lambda_g[m+n]a_n\right|^p\right]^{\frac{1}{p}} \\
&\geq& (\|g\|_{\infty}-\varepsilon)(\tau \mathcal{N}^{\tau})^{\frac{1}{p}}\left[\sum_{m=1}^{\infty}m^{\alpha}\left |\sum_{n=\mathcal{N}+1}^{\infty}\frac{1}{m+n}\cdot n^{-\frac{1+\alpha+\tau}{p}}\right|^p\right]^{\frac{1}{p}} \nonumber \\
&\geq& (\|g\|_{\infty}-\varepsilon)(\tau \mathcal{N}^{\tau})^{\frac{1}{p}}\left[\sum_{m=1}^{\infty}m^{\alpha}\left |\int_{\mathcal{N}+1}^{\infty}\frac{1}{m+x}\cdot x^{-\frac{1+\alpha+\tau}{p}}\,dx\right|^p\right]^{\frac{1}{p}}
\nonumber \\
&=&  (\|g\|_{\infty}-\varepsilon)(\tau \mathcal{N}^{\tau})^{\frac{1}{p}}\left[\sum_{m=1}^{\infty}m^{-1-\tau}\left |\int_{\frac{\mathcal{N}+1}{m}}^{\infty}\frac{1}{1+t}\cdot t^{-\frac{1+\alpha+\tau}{p}}\,dt\right|^p\right]^{\frac{1}{p}}\nonumber.
\end{eqnarray}

It is clear that
\begin{eqnarray}\label{eq-2}
\lefteqn{\left[\sum_{m=1}^{\infty}m^{-1-\tau}\left |\int_{\frac{\mathcal{N}+1}{m}}^{\infty}\frac{1}{1+t}\cdot t^{-\frac{1+\alpha+\tau}{p}}\,dt\right|^p\right]^{\frac{1}{p}}}\\
&&\geq \left[\sum_{m=\mathcal{N}+1}^{\infty}m^{-1-\tau}\left |\int_{0}^{\infty}\frac{1}{1+t}\cdot t^{-\frac{1+\alpha+\tau}{p}}\,dt-\int_{0}^{\frac{\mathcal{N}+1}{m}}\frac{1}{1+t}\cdot t^{-\frac{1+\alpha+\tau}{p}}\,dt\right|^p\right]^{\frac{1}{p}}. \nonumber
\end{eqnarray}

On the other hand, when $\tau\in (0, p-1-\alpha)$, we have
\begin{equation}\label{eq-3}
D_{p,\alpha}(\tau):=\int_{0}^{\infty}\frac{1}{1+t}\cdot t^{-\frac{1+\alpha+\tau}{p}}\,dt=\pi \csc\frac{\pi(1+\alpha+\tau)}{p},
\end{equation}
and
\begin{eqnarray}\label{eq-4}
E_{p, \alpha}(\tau,m):&=&\int_{0}^{\frac{\mathcal{N}+1}{m}}\frac{1}{1+t}\cdot t^{-\frac{1+\alpha+\tau}{p}}\,dt\leq  \int_{0}^{\frac{\mathcal{N}+1}{m}}t^{-\frac{1+\alpha+\tau}{p}}\,dt \\
&=&\frac{p}{p-1-\alpha-\tau}\cdot \left(\frac{\mathcal{N}+1}{m}\right)^{\frac{p-1-\alpha-\tau}{p}}. \nonumber
\end{eqnarray}

By using the Bernoulli's inequality(see \cite{K}), (\ref{eq-3}) and (\ref{eq-4}), we see that
\begin{eqnarray}\label{eq-5}
\lefteqn{\left |\int_{0}^{\infty}\frac{1}{1+t}\cdot t^{-\frac{1+\alpha+\tau}{p}}\,dt-\int_{0}^{\frac{\mathcal{N}+1}{m}}\frac{1}{1+t}\cdot t^{-\frac{1+\alpha+\tau}{p}}\,dt\right|^p}
 \\
&&=[\pi \csc\frac{\pi(1+\alpha+\tau)}{p}]^{p}\left|1-\frac{E_{p, \alpha}(\tau, m)}{D_{p,\alpha}(\tau)}\right|^p\nonumber \\
&&\geq [\pi \csc\frac{\pi(1+\alpha+\tau)}{p}]^{p}\left[1-\frac{pE_{p, \alpha}(\tau, m)}{D_{p,\alpha}(\tau)}\right], \nonumber
\end{eqnarray}
and
\begin{eqnarray}\label{eq-6}
\lefteqn{\sum_{m=\mathcal{N}+1}^{\infty}m^{-1-\tau}\cdot\frac{pE_{p, \alpha}(\tau, m)}{D_{p, \alpha}(\tau)}}\\&&\quad \leq \frac{p^2(\mathcal{N}+1)^{\frac{p-1-\alpha-\tau}{p}}}{(p-1-\alpha-\tau)D_{p, \alpha}(\tau)} \sum_{m=\mathcal{N}+1}^{\infty} m^{-1-\tau-\frac{p-1-\alpha-\tau}{p}}\nonumber \\
&&\quad \leq  \frac{p^2(\mathcal{N}+1)^{\frac{p-1-\alpha-\tau}{p}}}{(p-1-\alpha-\tau)D_{p, \alpha}(\tau)} \int_{\mathcal{N}+1}^{\infty} x^{-1-\tau-\frac{p-1-\alpha-\tau}{p}}\,dx \nonumber \\
&&\quad =\frac{p^3(\mathcal{N}+1)^{-\tau}[D_{p, \alpha}(\tau)]^{-1}}{(p-1-\alpha-\tau)(p\tau+p-1-\alpha-\tau)}:=F_{p, \alpha}(\mathcal{N}, \tau). \nonumber
\end{eqnarray}

By (\ref{eq-2}), (\ref{eq-5}), (\ref{eq-6}), we obtain that
\begin{eqnarray}\label{eq-7}
\lefteqn{\left[\sum_{m=1}^{\infty}m^{-1-\tau}\left |\int_{\frac{\mathcal{N}+1}{m}}^{\infty}\frac{1}{1+t}\cdot t^{-\frac{1+\alpha+\tau}{p}}\,dt\right|^p\right]^{\frac{1}{p}}}\\
&&\geq \pi \csc\frac{\pi(1+\alpha+\tau)}{p}\left[\sum_{m=\mathcal{N}+1}^{\infty}m^{-1-\tau}- F_{p, \alpha}(\mathcal{N}, \tau)\right]^{\frac{1}{p}}\nonumber \\
&& \geq \pi \csc\frac{\pi(1+\alpha+\tau)}{p}\left\{[\tau(\mathcal{N}+1)^{\tau}]^{-1}- F_{p, \alpha}(\mathcal{N}, \tau)\right\}^{\frac{1}{p}}\nonumber \\
&&=\pi \csc\frac{\pi(1+\alpha+\tau)}{p}[\tau(\mathcal{N}+1)^{\tau}]^{-\frac{1}{p}}\left[1-\tau(\mathcal{N}+1)^{\tau}F_{p, \alpha}(\mathcal{N}, \tau)\right]^{\frac{1}{p}}. \nonumber
\end{eqnarray}
It follows from (\ref{eq-1}) that
\begin{equation}\label{eq-8}
\|H_g\|_{\alpha}\geq (\|g\|_{\infty}-\varepsilon)\pi \csc\frac{\pi(1+\alpha+\tau)}{p}\cdot [\mathcal{N}(\mathcal{N}+1)^{-1}]^{\frac{\tau}{p}}\left[1-\tau(\mathcal{N}+1)^{\tau}F_{p, \alpha}(\mathcal{N}, \tau)\right]^{\frac{1}{p}}.
\end{equation}

Take $\tau \rightarrow 0^{+}$ in (\ref{eq-8}),  we easily see that
\begin{eqnarray}
\|H_g\|_{\alpha}\geq (\|g\|_{\infty}-\varepsilon)\pi \csc\frac{\pi(1+\alpha)}{p},\nonumber
\end{eqnarray}
for any $\varepsilon \in (0, \|g\|_{\infty})$. It follows that  $\|H_g\|_{\alpha}\geq \|g\|_{\infty}\pi \csc\frac{\pi(1+\alpha)}{p}$. Hence $\|H_g\|_{\alpha}=\|g\|_{\infty}\pi \csc\frac{\pi(1+\alpha)}{p}$. Theorem \ref{main-1} is proved.

\end{proof}

\section*{Acknowledgments}
The author are grateful to the referee for his/her valuable suggestions which improve this paper.


\end{document}